\documentclass[preprint,12pt]{elsarticle}

\usepackage{graphicx}
\usepackage{amssymb, amsthm, amsmath, fullpage}
\usepackage{hyperref}
\usepackage{cleveref}
\usepackage[english]{babel}
\usepackage{bbm}
\usepackage{float}
\usepackage{comment}
\allowdisplaybreaks

\theoremstyle{plain}
\newtheorem{thm}{Theorem}[section]

\theoremstyle{definition}
\newtheorem{defn}[thm]{Definition}

\newtheorem{rem}[thm]{Remark}

\newtheorem{claim}[thm]{Claim}

\newtheorem{theorem}{Theorem}
\newtheorem{case}{Case}
\makeatletter
\@addtoreset{case}{subsection}

\newtheorem{subcase}{Subcase}[case]

\makeatother

\journal{Discrete Applied Mathematics}

\begin{document}

\begin{frontmatter}

\title{Off-diagonal Rado numbers for $x+y+c=z$ and $x+y+k=z$}

\author[1]{Rajat Adak}
\ead{rajatadak@iisc.ac.in}
\author[1]{Yash Bakshi\corref{cor1}}
\ead{yashbakshi@iisc.ac.in}
\author[1]{L. Sunil Chandran}
\ead{sunil@iisc.ac.in}
\author[1]{Saraswati Girish Nanoti}
\ead{saraswatig@iisc.ac.in}
\cortext[cor1]{Corresponding author}
\address[1]{Department of Computer Science and Automation, Indian Institute of Science, Bangalore, India}

\begin{abstract}
The study of Ramsey-type problems for linear equations originated with Schur's theorem 
and was later placed in a systematic framework by Richard Rado. In the 
off-diagonal setting, one fixes a pair of linear equations 
$(\mathcal{E}_1, \mathcal{E}_2)$ and seeks the least integer $N$ such that every 
red--blue coloring of $\{1,2,\dots,N\}$ contains either a red solution to 
$\mathcal{E}_1$ or a blue solution to $\mathcal{E}_2$. This threshold integer is referred to as the off-diagonal Rado number of the system $(\mathcal{E}_1, \mathcal{E}_2)$.
In this work, we study the discrete and continuous two-color off-diagonal Rado numbers for the nonhomogeneous linear equations $x+y+c=z$ and $x+y+k=z$, where $c\leq k$. In the discrete setting, $c$ and $k$ are nonnegative integers, whereas in the continuous setting, they are nonnegative real numbers. We determine the exact discrete and continuous two-color off-diagonal Rado numbers for this pair of shifted Schur equations.
\end{abstract}

\begin{keyword}
Off-diagonal Rado number \sep Continuous Rado number \sep Schur-type equations
\MSC[2020] 05D10 \sep 05C55 
\end{keyword}
\end{frontmatter}

\section{Introduction}

Schur's classical theorem \cite{Schur} establishes that every finite coloring of the positive integers $\mathbb {N} $ must contain a monochromatic solution to the equation $x+y=z$. Rado \cite{rado1933studien} substantially generalized Schur's theorem by characterizing the homogeneous systems of linear equations that are partition regular over the positive integers. Such a system is partition regular if every finite coloring of the positive integers admits a monochromatic solution. While the early literature largely concentrated on diagonal settings, where a single equation must have a monochromatic solution, attention has more recently shifted toward other variants, such as \emph{off-diagonal, disjunctive or simultaneous} versions.

In the off-diagonal setting, rather than seeking a monochromatic solution to a single equation, one seeks to guarantee that at least one of two equations admits a monochromatic solution in its prescribed color class. This viewpoint naturally extends classical Schur-type problems by allowing different equations to be associated with different colors. As a result, it leads to a richer framework that reveals new combinatorial behavior and provides a deeper structural understanding of these problems.

\begin{defn}[Rado number]
Let $\mathcal{E}$ be a linear equation with integer coefficients. 
The \emph{t-color Rado number} of $\mathcal{E}$, denoted by $R_t(\mathcal{E})$, is the smallest positive integer $N$ such that every $t$-coloring of the set $\{1,2,\dots,N\}$ contains a monochromatic solution to $\mathcal{E}$. If no such integer $N$ exists, then $R_t(\mathcal{E})$ is said to be infinite.
\end{defn}

\begin{defn}[Off-diagonal Rado number]
Let $\mathcal{E}_1$ and $\mathcal{E}_2$ be linear equations with integer coefficients. 
       The \emph{two-color off-diagonal Rado number} of the system of equations $(\mathcal{E}_1,\mathcal{E}_2)$, denoted by $R_2(\mathcal{E}_1,\mathcal{E}_2)$, is the smallest positive integer $N$ such that every red--blue coloring of the set $\{1,2,\dots,N\}$ contains either a red solution to $\mathcal{E}_1$ or a blue  solution to $\mathcal{E}_2$. If no such integer $N$ exists, then $R_2(\mathcal{E}_1,\mathcal{E}_2)$ is said to be infinite.
\end{defn} 

\begin{defn}[Off-diagonal continuous Rado number]
Let $\mathcal{E}_1$ and $\mathcal{E}_2$ be linear equations with real coefficients, and let $\alpha \in \mathbb{R}$. 
The \emph{two-color off-diagonal continuous Rado number} of the system of equations $(\mathcal{E}_1,\mathcal{E}_2)$ on $[\alpha,\infty)$, denoted by $R_{2}(\mathcal{E}_1,\mathcal{E}_2;\alpha)$, is the smallest real number $N \ge \alpha$ such that every red--blue coloring of the interval $[\alpha,N]$ contains either a red solution to $\mathcal{E}_1$ or a blue solution to $\mathcal{E}_2$. If no such real number $N$ exists, then $R_{2}(\mathcal{E}_1,\mathcal{E}_2;\alpha)$ is said to be infinite.
\end{defn}

\section{Previous Work}

The study of Rado numbers originates in classical results in Ramsey theory. Schur~\cite{Schur} proved that the equation $x+y=z$ is regular, introducing what are now known as Schur numbers. Rado~\cite{rado1933studien} generalized this work by characterizing all partition-regular linear systems, thereby identifying precisely which systems admit monochromatic solutions under every finite coloring.

Burr and Loo~\cite{BurrLoo1975} initiated the systematic study of exact Rado numbers for specific linear equations. In particular, they determined the two-color Rado number of the shifted Schur equation
\[
R_2(x+y+c=z)=4c+5
\]
for $c \ge -1$.  
 Beutelspacher and Brestovansky~\cite{beutelspacher2006generalized} proved that, for every positive integer $k$, the two-color Rado number of the generalized Schur equation is,
\[
R_2(x_1 + x_2 + \cdots + x_k = x_0)=k^2 + k - 1.
\]

A natural extension of this theory is the \emph{off-diagonal} setting, where instead of a single equation, one considers a pair of equations $(\mathcal{E}_1,\mathcal{E}_2)$ and seeks the smallest integer $N$ such that every red--blue coloring of $[1, N]$ yields either a red solution to $\mathcal{E}_1$ or a blue solution to $\mathcal{E}_2$. This framework generalizes classical Rado numbers by allowing different equations to be associated with different colors.

The first systematic study in this direction was carried out by Robertson and Schaal~\cite{robertson_schaal2001}, who introduced off-diagonal generalized Schur numbers. For positive integers $k$ and $l$, they determined the smallest integer $S(k,l)$ such that every red--blue coloring of $[1,S(k,l)]$ contains either a red solution to
\[
x_1 + x_2 + \cdots + x_{k-1} = x_k
\]
or a blue solution to
\[
x_1 + x_2 + \cdots + x_{l-1} = x_l.
\]

Further developments were made by Myers and Robertson~\cite{MyersRobertson2007}, who established general bounds for homogeneous linear equations ($x+qy=z, x+sy=z$) and obtained exact values in several special cases. Yao and Xia~\cite{yaoxia} subsequently obtained formulas for other homogeneous families, including
\[
R_2(3x+3y=z,3x+3qy=z)
\]
and
\[
R_2(2x+3y=z,2x+2qy=z).
\]
More recently, Jing and Mei~\cite{jingmie2024} determined
\[
R_2(2x+qy=2z, 2x+sy=2z)
\]
for odd integers \(q>s\geq 1\).

Sabo, Schaal and Tokaz \cite{sabo2007disjunctive} studied shifted Schur equations such as $ x_1+x_2+c=x_3$ and determined the \textit{disjunctive} Rado number. In 2025, Vestal and Glackin \cite{vestal2025linear} studied the same equations in the context of \textit{simultaneous} solutions. In this paper, we determine the
corresponding two-color \textit{off-diagonal} Rado number for the same equations 
\(x+y+c=z\) and \(x+y+k=z\), thereby covering another natural
Ramsey-type variant for this shifted Schur family. More recently, an \textit{off-diagonal} Rado number for a combination of homogeneous and nonhomogeneous equations has also been studied~\cite{adak2026off}. 

\section{Main Result}
\noindent
Throughout this paper, we denote the two-color off-diagonal Rado number by
\[
R_2(c,k):=R_2(x+y+c=z,\;x+y+k=z),
\]
where \(x+y+c=z\) is the red equation and \(x+y+k=z\) is the blue equation.
\subsection{\textbf{The discrete version}}
\begin{theorem} 
Let $c$ and $k$ be nonnegative integers with $c \le k$. Then the two-color off-diagonal Rado number $R_2(c,k)$ for the system 

\begin{align*}
x+y+c &= z \quad \text{(red)},\\
x+y+k  &= z \quad \text{(blue)},
\end{align*}
is given by
\[
R_2(c,k) =
\begin{cases}
\infty, & \text{if $c \not\equiv k\pmod{2}$} \\k+
3c+5,  & \text{if $c \equiv k\pmod{2}$ and $k \le2c$}\\
2k+c+4 & \text{if $c\equiv k\pmod{2}$ and $k>2c$ }
\end{cases}
\]
\end{theorem}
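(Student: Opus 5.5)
We prove the three regimes separately: the infinite case by an explicit coloring of all of $\mathbb{N}$, and the finite cases by matching lower-bound colorings and upper-bound forcing arguments. Only the upper bound is left as a plan to be carried out, not a verified argument.

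\textbf{The case $c\not\equiv k \pmod 2$.} We colour by parity so that neither equation has a solution in its colour.
\begin{itemize}
\item If $c$ is odd and $k$ is even, colour the even integers red and the odd integers blue. A red solution would give $x+y+c$ odd but $z$ even. A blue solution would give $x+y+k$ even but $z$ odd.
\item If $c$ is even and $k$ is odd, colour the odd integers red and the even integers blue. The same parity mismatch rules out solutions in both colours.
\end{itemize}
These colourings work on every $[1,N]$, so $R_2(c,k)=\infty$.

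\textbf{Lower bounds when $c\equiv k\pmod 2$.} We exhibit two block colourings.
\begin{enumerate}
\item On $[1,k+3c+4]$, colour $[1,c+1]$ red, $[c+2,\,2c+k+3]$ blue, and $[2c+k+4,\,k+3c+4]$ red.
\begin{itemize}
\item Two blue numbers give $x+y+k\ge 2c+k+4$, which lands beyond the blue block.
\item Two small red numbers give $x+y+c\in[c+2,3c+2]$. This interval is blue because $3c+2\le 2c+k+3$, using $k\ge c$.
\item Any red sum involving the top red block is at least $1+(2c+k+4)+c>k+3c+4$, so it exceeds $N$.
\end{itemize}
Hence $R_2(c,k)\ge k+3c+5$.
\item On $[1,2k+c+3]$, colour $[1,k+1]$ blue and $[k+2,\,2k+c+3]$ red.
\begin{itemize}
\item Blue sums are at least $k+2$, so they are red or exceed $N$.
\item Red sums are at least $2k+c+4>N$.
\end{itemize}
Hence $R_2(c,k)\ge 2k+c+4$.
\end{enumerate}
Comparing the two bounds, $k+3c+5\ge 2k+c+4$ exactly when $k\le 2c+1$. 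At $k=2c+1$ the two values coincide. This gives the claimed maximum in each regime.

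\textbf{Upper bounds.} Let $N$ be the claimed value. Suppose a colouring $\chi$ of $[1,N]$ has no red solution of $x+y+c=z$ and no blue solution of $x+y+k=z$. We derive a contradiction by forcing chains, splitting on $\chi(1)$.
\begin{itemize}
\item Suppose $1$ is red. Then $c+2=1+1+c$ is blue, and $2c+3=1+(c+2)+c$ then forces colours on further points. We also get $2(c+2)+k=2c+k+4$ red.
\item Suppose $1$ is blue. Then $k+2$ is red, $2k+4+c$ is blue, and so on.
\end{itemize}
The parity hypothesis enters through the integer $d=(k-c)/2$. The identity
\[
x+y+k=(x+d)+(y+d)+c
\]
lets us translate a blue triple into a red triple and back. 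This is what pins down the colours of the intermediate points $x+d$ and $y+d$.

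In the regime $k\le 2c$, we aim to show the forced colouring must agree with the first extremal colouring above. Then the point $k+3c+5$ cannot be coloured: writing $k+3c+5=1+(2c+k+4)+c$ makes it blue, while $k+3c+5=(c+2)+(c+3-d')+k$ for a suitable blue pair $d'$ makes it red. In the regime $k>2c$, the analogous target is to force $[1,k+1]$ essentially blue, so that $2k+c+4$ receives both constraints.

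\textbf{Main obstacle.} The hardest step is the upper-bound case analysis, in particular the subcase where $\chi(1)$ differs from its colour in the extremal colouring. There, short chains have to be propagated through several nested subcases, and each chain element must be checked to lie in $[1,N]$. This is where the inequalities $k\le 2c$ and $k>2c$ are used. I would first try to show that $1$ and $c+2$ (respectively $1$ and $k+2$) are forced to their extremal colours, and then run a single descending chain to the top element $N$.
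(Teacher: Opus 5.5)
Your parity colourings and both block colourings for the lower bounds are correct, and they are exactly the paper's. The genuine gap is the upper bound $R_2(c,k)\le N$, which is the substantive half of the theorem. You give a plan rather than a proof, and the plan as written does not close.

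First, the triple $(1,c+2,2c+3)$ you cite is mixed ($1$ red, $c+2$ blue), so it forces nothing. Second, consider the regime $k\le 2c$ with $1$ red. Making $k+3c+5$ red via $(c+2)+y+k$ with $y$ blue forces $y=2c+3$, so you need $2c+3$ blue, and your sketch never establishes this. The paper runs the following chain instead:
\begin{itemize}
\item $c+2$ blue, then $k+2c+4$ red;
\item $m=\frac{k+c}{2}+2$ blue via $(m,m,k+2c+4)_c$, then $2k+c+4$ red via $(m,m,2k+c+4)_k$;
\item $k+3c+5$ blue via $(1,k+2c+4,k+3c+5)_c$.
\end{itemize}
Your identity with $d=\frac{k-c}{2}$ is exactly this midpoint step, since $m=(c+2)+d=(k+2)-d$. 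After this, $(2c+3,c+2,k+3c+5)_k$ forces $2c+3$ \emph{red}. Four more steps follow: $3c+4$ blue, $2c-k+2$ red, $3c-k+3$ blue, and $k+2$ blue via $(k+2,k+2,2k+c+4)_c$. These produce the blue solution $(k+2,3c-k+3,k+3c+5)$.

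You flag the subcase where $1$ is blue as the hardest but leave it empty. There the chain is $k+2$ red, $2k+c+4$ blue, $m$ red, $k+2c+4$ blue, $c+2$ red, $3c+4$ blue, $k+3c+5$ red via $(1,3c+4,k+3c+5)_k$, and $k+c+3$ blue via $(k+c+3,c+2,k+3c+5)_c$. It ends in the blue solution $(1,k+c+3,2k+c+4)$.

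In the regime $k>2c$ you give no chain at all. Your target of forcing $[1,k+1]$ to be essentially blue is not how the contradiction arises: with $1$ blue, the triple $(m,m,2k+c+4)_k$ forces $m\le k+1$ to be red. The missing idea is to branch on the colour of the auxiliary point $k-2c$, which lies in $[1,N]$ precisely because $k>2c$.

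\emph{Case $1$ red.} Here $c+2$ is blue, $k+2c+4$ is red and $m$ is blue.
\begin{itemize}
\item If $k-2c$ is red, then $(1,k-2c,k-c+1)_c$ makes $k-c+1$ blue.
\item If $k-2c$ is blue, then $(c+2,k-2c,2k-c+2)_k$ makes $2k-c+2$ red, and $(k-c+1,k-c+1,2k-c+2)_c$ makes $k-c+1$ blue.
\end{itemize}
Either way, $2k+3$ is red and $2k+c+4$ is blue, so $(m,m,2k+c+4)$ is a blue solution.

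\emph{Case $1$ blue.} Here $k+2$ is red and $2k+c+4$ is blue.
\begin{itemize}
\item If $k-2c$ is red, then $2k-c+2$ is blue and the second midpoint $\frac{k-c}{2}+1$ is red, so $(\frac{k-c}{2}+1,\frac{k-c}{2}+1,k+2)$ is a red solution.
\item If $k-2c$ is blue, then $m$ and $3c+4$ are red, $c+2$ is blue and $k+2c+4$ is red, so $(m,m,k+2c+4)$ is a red solution.
\end{itemize}

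These chains also need range checks that use the regime hypotheses. For instance, $2c-k+2\ge 1$ and $2k+c+4\le k+3c+5$ need $k\le 2c+1$, and $k-2c\ge 1$ needs $k>2c$. Until the chains and checks are written out, your argument establishes only the lower bounds.
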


\begin{proof} We denote $x+y+c =z$ as the $c$-equation and $x+y+k=z$ as the $k$-equation. We evaluate $R_2(c,k)$ case by case.
\begin{case}
    $c$ and $k$ have opposite parity.
\end{case}
First, suppose $c$ is even, and $k$ is odd. Color every odd integer red and every even integer blue. 
If $x,y,z$ were a red solution to $x+y+c=z$, then $x,y,z$ would all be odd. However, the sum of two odd numbers is even, and as $c$ is also even, so $z$ would be even, a contradiction. 

Similarly, if $x,y,z$ were a blue solution to $x+y+k=z$, then $x,y,z$ would all be even. But the sum of two even numbers and an odd number is odd, so $z$ would be odd, again a contradiction. Hence, this coloring avoids both types of monochromatic solutions.

If $c$ is odd and $k$ is even, reverse the coloring: color every even integer red and every odd integer blue. In this case, any solution to $x+y+c=z$ would require $x,y,z$ to be even, but the sum of two even numbers and an odd number is odd; therefore, this is impossible. Likewise, any blue solution to $x+y+k=z$ would require $x,y,z$ to be odd, but the sum of two odd numbers and an even number is even, therefore this is again impossible.

Thus, for every $N$ there exists a coloring of $[1, N]$ with no red solution to $x+y+c=z$ and no blue solution to $x+y+k=z$. Therefore, the off-diagonal Rado number is infinite.

\begin{case}
    $c$ and $k$ have same parity with $k\le2c $.
\end{case}

\begin{claim}\label{clm1}
    $R_2(c,k) \geq k+3c+5$
\end{claim}
\begin{proof}
Let \[
\begin{aligned}
R &= \{1,2,\dots,c+1\} \cup \{k+2c+4,\dots,k+3c+4\}, \\
B &= \{c+2,c+3,\dots,k+2c+3\}.
\end{aligned}
\]
Consider the two-coloring of $[1,k+3c+4]$ defined by
\[
\chi(n)=
\begin{cases}
\text{Red}, & n \in R,\\
\text{Blue}, & n \in B.
\end{cases}
\]
Under this coloring, we will show that there is
\emph{no red solution} to $c$-equation  and \emph{no blue solution} to $k$-equation with all variables in $[1,k+3c+4]$.

\smallskip
\noindent\textbf{No blue solution to the $k$-equation:}\\
If $x$ and $y$ are blue, then $c+2\le x,y\le k+2c+3$ and, hence $z=x+y+k \ge (c+2)+(c+2)+k = k+2c+4$ which is red so, for any $x$, $y \in B$, $z$ is either red or outside the colored interval. So there is no blue solution to the $k$-equation.

\smallskip
\noindent\textbf{No red solution to the $c$-equation:}\\
If $x$ and $y$ are red from the first part of the interval, then $z$ will be blue as $z=x+y+c\ge 1+1+c=c+2$ and $z\le(c+1)+(c+1)+c=3c+2$ and since $c \le k$, $3c+2 < k+2c+3$.
If $x$ comes from the first part and $y$ from the second part of the Red interval, then $z \ge 1+(k+2c+4)+c=k+3c+5$, but $z$ should lie in the interval $[1,k+3c+4]$. Similarly, if both \(x\) and \(y\) come from the second red block, then $z=x+y+c\ge 2(k+2c+4)+c>k+3c+4$, so \(z\) lies outside the colored interval. Hence, there is no red solution to the $c$-equation.

\smallskip
\noindent
Consequently, this coloring avoids a red solution to the \(c\)-equation
and a blue solution to the \(k\)-equation. Therefore, the off-diagonal Rado number satisfies $R_2(c,k)\ge k+3c+5$.\end{proof}

\begin{claim}\label{clm2}
    $R_2(c,k) \leq k+3c+5$
\end{claim}
\begin{proof}We show that any red--blue coloring of set $[1,k+3c+5]$ gives either a red solution to $x+y+c=z$ or blue solution to $x+y+k=z$. Assume, for contradiction, there is a coloring such that no red solution to $x+y+c=z$ and no blue solution to $x+y+k=z$ exists, respectively. We start with an arbitrary coloring of numbers starting with $1$ (and using numbers up to $k+3c+5$), and try to extend this coloring such that the desired conditions are satisfied, but end up with either a red solution to the first equation or a blue solution to the second equation. 
\vspace{2mm}

\noindent\textbf{Notation.} We use the convention that, when trying to avoid monochromatic (red or blue) solutions, a given coloring with a particular solution is represented as an ordered triple \((x,y,z)\)--may force an extension of the coloring. For example if $1$ is red, taking $x = y =1$ in $c$-equation, to avoid a red solution, we get, 
$
(1,1,c+2)_c\;\Longrightarrow\; c+2 \text{ is blue}.
$
\begin{rem}
At boundary values, some forcing points may coincide. Each branch is
followed only until a contradiction occurs. A repeated same-color assignment
is redundant, whereas an opposite-color assignment terminates the branch.
This includes \(c=k=0\).
\end{rem}
\begin{subcase} Assume that $1$ is colored red. \end{subcase}
\begingroup
\begin{align*}
(1,1,c+2)_c&\Longrightarrow {c+2} \text{ is blue}.\\
(c+2,c+2,k+2c+4)_k&\Longrightarrow {k+2c+4} \text{ is red}.\\
(\tfrac{k+c}{2}+2,\tfrac{k+c}{2}+2,k+2c+4)_c&\Longrightarrow {\tfrac{k+c}{2}+2} \text{ is blue}.\\
\left(\tfrac{k+c}{2}+2,\tfrac{k+c}{2}+2,2k+c+4\right)_k&\Longrightarrow {2k+c+4} \text{ is red}.\\
(1,k+2c+4,k+3c+5)_c&\Longrightarrow {k+3c+5} \text{ is blue}.\\
(2c+3,c+2,k+3c+5)_k&\Longrightarrow {2c+3} \text{ is red}.\\
(1,2c+3,3c+4)_c&\Longrightarrow {3c+4} \text{ is blue}.\\
(2c-k+2,c+2,3c+4)_k&\Longrightarrow {2c-k+2} \text{ is red}.\\
(1,2c-k+2,3c-k+3)_c&\Longrightarrow {3c-k+3} \text{ is blue}.\\
(k+2,k+2,2k+c+4)_c&\Longrightarrow {k+2} \text{ is blue}.\\
\end{align*}
Thus, $(k+2,3c-k+3,k+3c+5)$ is a blue solution to the $k$-equation, a contradiction.
\endgroup

\begin{subcase}
    $1$ is colored blue.
\end{subcase} 
\begingroup

\begin{align*}
(1,1,k+2)_k&\Longrightarrow {k+2} \text{ is red}.\\
(k+2,k+2,2k+c+4)_c&\Longrightarrow {2k+c+4} \text{ is blue}.\\
\left(\tfrac{k+c}{2}+2,\tfrac{k+c}{2}+2,2k+c+4\right)_k&\Longrightarrow {\tfrac{k+c}{2}+2} \text{ is red}.\\
\left(\tfrac{k+c}{2}+2,\tfrac{k+c}{2}+2,k+2c+4\right)_c&\Longrightarrow {k+2c+4} \text{ is blue}.\\
(c+2,c+2,k+2c+4)_k&\Longrightarrow {c+2} \text{ is red}.\\
(c+2,c+2,3c+4)_c&\Longrightarrow {3c+4} \text{ is blue}.\\
(1,3c+4,k+3c+5)_k&\Longrightarrow {k+3c+5} \text{ is red}.\\
(k+c+3,c+2,k+3c+5)_c&\Longrightarrow {k+c+3} \text{ is blue}.\\
\end{align*}
Thus $(1,k+c+3,2k+c+4)_k$ is a blue solution to the $k$-equation, a contradiction.
\endgroup
Thus, we arrive at a contradiction in both the subcases. Therefore, $R_2(c,k) \leq k+3c+5$.
\end{proof}
From Claim \ref{clm1} and \ref{clm2} we get that when $c\le k\le2c$, $R_2(c,k) = k+3c+5$.

\begin{rem}
Note that $k+3c+5 > 2k+c+4$ as $k \le 2c$ and $2c-k+1 >0$. Also, $\frac{k+c}{2}$ is an integer because $c$ and $k$ have the same parity.
\end{rem}
\begin{case}
     $c$ and $k$ have the same parity with $k>2c $.
\end{case}
\begin{claim}\label{clm3.3}
    $R_2(c,k) \geq 2k+c+4$.
\end{claim} 

\begin{proof}
Let \[
\begin{aligned}
R &= \{k+2,\dots,2k+c+3\}, \\
B &= \{1,2,\dots,k+1\}.
\end{aligned}
\]
 Consider the $2$-coloring of $[1,2k+c+3]$ defined by,
\[
\chi(n)=
\begin{cases}
\text{Red}, & n \in R,\\
\text{Blue}, & n \in B.
\end{cases}
\]
We show that under this coloring, there is neither a red solution to the $c$-equation, nor a blue solution to the $k$-equation.

\smallskip
\noindent\textbf{No blue solution to the $k$-equation:}\\
If both $x$ and $y \in B$, then
$
z=x+y+k\ge 1+1+k=k+2.
$
Thus, if \(z\le 2k+c+3\), then \(z\in R\), whereas if
\(z>2k+c+3\), then \(z\) lies outside the colored interval. Hence,
there is no blue solution to the \(k\)-equation.

\smallskip
\noindent\textbf{No red solution to the $c$-equation:}\\
If both $x$ and $y \in R$, then $z=x+y+c \ge (k+2)+(k+2)+c=2k+c+4$ which is outside the colored interval. Therefore, no red solution to the $c$-equation exists. \\
We get $R_2(c,k) > 2k + c + 3$ as a lower bound. Hence, $R_2(c,k) \ge 2k+c+4$.
\end{proof}
\begin{claim}\label{clm3.4}
    $R_2(c,k) \leq 2k+c+4$
\end{claim}
\begin{proof} Suppose, for a contradiction, that there exists a red--blue coloring of $\{1,2,\dots,2k+c+4\}$ containing neither a red solution to the $c$-equation nor a blue solution to the $k$-equation. We derive a
contradiction by analyzing the color assigned to $1$.

\begin{subcase}
     $1$ is colored red.
\end{subcase}
\begingroup
\begin{align*}
(1,1,c+2)_c&\Longrightarrow {c+2} \text{ is blue}.\\
(c+2,c+2,k+2c+4)_k&\Longrightarrow {k+2c+4}\text{ is red}.\\
(\frac{k+c}{2}+2,\frac{k+c}{2}+2,k+2c+4)_c&\Longrightarrow {\frac{k+c}{2}+2}\text{ is blue}.
\end{align*}
Since \(k>2c\), we have \(k-2c\geq1\). Hence \(k-2c\in[1,2k+c+4]\), and it is valid to consider its color.

Suppose $k-2c$ is red,
\begin{align*}
(1,k-2c,k-c+1)_c
  &\Longrightarrow {k-c+1}\text{ is blue}.\\
  (k-c+1,c+2,2k+3)_k
  &\Longrightarrow {2k+3}\text{ is red}.\\
  (1,2k+3,2k+c+4)_c
  &\Longrightarrow {2k+c+4}\text{ is blue}.
\end{align*}
Thus $(\frac{k+c}{2}+2,\frac{k+c}{2}+2,2k+c+4)_k$ is a blue solution, a contradiction.

Suppose $k-2c$ is blue,
\begin{align*}
(c+2,k-2c,2k-c+2)_k
  &\Longrightarrow {2k-c+2}\text{ is red}.\\
  (k-c+1,k-c+1,2k-c+2)_c
  &\Longrightarrow {k-c+1}\text{ is blue}.\\
  (k-c+1,c+2,2k+3)_k
  &\Longrightarrow {2k+3}\text{ is red}.\\
    (1,2k+3,2k+c+4)_c
  &\Longrightarrow {2k+c+4}\text{ is blue}.
\end{align*}
Thus $(\frac{k+c}{2}+2,\frac{k+c}{2}+2,2k+c+4)_k$ is a blue solution, a contradiction.
\endgroup

\begin{subcase}
     $1$ is colored blue.
\end{subcase}
\begingroup
\begin{align*}
(1,1,k+2)_k&\Longrightarrow k+2 \text{ is red.}\\
(k+2,k+2,2k+c+4)_c&\Longrightarrow 2k+c+4 \text{ is blue.}
\end{align*}
Suppose $k-2c$ is red,
\begin{align*}
(k-2c,k+2,2k-c+2)_c&\Longrightarrow 2k-c+2 \text{ is blue.}\\
(\frac{k-c}{2}+1,\frac{k-c}{2}+1,2k-c+2)_k&\Longrightarrow \frac{k-c}{2}+1 \text{ is red.}
\end{align*}
Thus $(\frac{k-c}{2}+1,\frac{k-c}{2}+1,k+2)_c$\text{ is a red solution, a contradiction.}\\
Suppose $k-2c$ is blue,
\begin{align*}
(\frac{k+c}{2}+2,\frac{k+c}{2}+2,2k+c+4)_k&\Longrightarrow \frac{k+c}{2}+2 \text{ is red.}\\
(k-2c,3c+4,2k+c+4)_k&\Longrightarrow 3c+4 \text{ is red.}\\
(c+2,c+2,3c+4)_c&\Longrightarrow c+2 \text{ is blue.}\\
(c+2,c+2,k+2c+4)_k&\Longrightarrow k+2c+4 \text{ is red.}
\end{align*}
Thus $(\frac{k+c}{2}+2,\frac{k+c}{2}+2,k+2c+4)_c$\text{ is a red solution, a contradiction.}
\endgroup
Thus, we arrive at a contradiction in both the subcases. Therefore, $R_2(c,k) \leq 2k+c+4$. \end{proof}
From Claim \ref{clm3.3} and \ref{clm3.4} we get that when $k>2c$, $R_2(c,k) = 2k+c+4$.\\

Thus, we get the two-color off-diagonal Rado number for all three cases. Now we extend this to a two-color continuous off-diagonal Rado number. 
\end{proof}

\subsection{\textbf{The continuous version}}

\begin{theorem} 
Let $c$ and $k$ be nonnegative real numbers with $c \le k$ and $\alpha\ge0$. Then the two-color off-diagonal continuous Rado number $R_2(c,k ;\alpha)$ on the interval $[\alpha, N]$ for the system 
\begin{align*}
x+y+c &= z \quad \text{(red)},\\
x+y+k  &= z \quad \text{(blue)},
\end{align*}
is given by
\[
R_2(c,k ; \alpha) =
\begin{cases}
k+3c+5\alpha,  & \text{if $k \le2c+\alpha$} \\
2k+c+4\alpha & \text{if $k>2c+\alpha$ }
\end{cases}
\]
\end{theorem}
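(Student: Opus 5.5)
The plan is to transplant the proof of Theorem 1 to the continuous setting: replace the integer $1$ by the left endpoint $\alpha$ and rewrite every point in the forcing chains as an affine expression in $c,k,\alpha$. Parity no longer matters, because $\frac{k+c}{2}$ is an admissible real number. The proof then has four parts, namely a lower bound and an upper bound in each of the two regimes $k\le 2c+\alpha$ and $k>2c+\alpha$. Since $R_2(c,k;\alpha)$ is defined as a smallest real number, for each lower bound it suffices to give, for the claimed value $N$, a coloring of the half-open interval $[\alpha,N)$ with no red solution to the $c$-equation and no blue solution to the $k$-equation. A coloring of $[\alpha,N']$ for any $N'<N$ is then obtained by restriction.

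\textbf{Lower bounds.} Suppose $k\le 2c+\alpha$ and set $N=k+3c+5\alpha$. Color red the sets $[\alpha,c+2\alpha)$ and $[k+2c+4\alpha,N)$, and color blue the set $[c+2\alpha,k+2c+4\alpha)$. If $x,y$ are blue, then $z=x+y+k\ge k+2c+4\alpha$, so $z$ is red or lies outside the interval. If $x,y$ both lie in the first red block, then $z=x+y+c\in[c+2\alpha,\,3c+4\alpha)$, and this set is contained in the blue block because $c\le k$. If one of $x,y$ lies in the second red block, then $z\ge \alpha+(k+2c+4\alpha)+c=N$, which is outside. Now suppose $k>2c+\alpha$ and set $N=2k+c+4\alpha$. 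Color $[\alpha,k+2\alpha)$ blue and $[k+2\alpha,N)$ red. A blue pair gives $z\ge k+2\alpha$, which is red or outside. A red pair gives $z\ge 2k+c+4\alpha=N$, which is outside.

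\textbf{Upper bounds.} I would repeat the forcing chains of Claims~\ref{clm2} and~\ref{clm3.4} on $[\alpha,N]$, making the substitutions
\[
c+2\mapsto c+2\alpha,\quad k+2c+4\mapsto k+2c+4\alpha,\quad \tfrac{k+c}{2}+2\mapsto \tfrac{k+c}{2}+2\alpha,\quad 2c-k+2\mapsto 2c-k+2\alpha,\quad k-2c\mapsto k-2c.
\]
The other points transform in the same way. For example, the triple $(k-c+1,c+2,2k+3)_k$ becomes $(k-c+\alpha,\,c+2\alpha,\,2k+3\alpha)_k$, and the triple $(1,2k+3,2k+c+4)_c$ becomes $(\alpha,\,2k+3\alpha,\,2k+c+4\alpha)_c$. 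I would first check mechanically that every triple still satisfies its equation. This holds because each discrete triple is homogeneous in the ``$1$'' terms when $c$ and $k$ are treated as constants. I would then derive the contradictions in exactly the same order as in the discrete proof.

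\textbf{Main obstacle.} The real work is verifying that every forced point lies in $[\alpha,N]$, and these range checks are exactly where the threshold $2c+\alpha$ comes from. In the first regime, the point $2c-k+2\alpha$ is at least $\alpha$ precisely when $k\le 2c+\alpha$. In the second regime, the point $k-2c$ is at least $\alpha$ precisely when $k\ge 2c+\alpha$. I would go through each chain, list every point used, and confirm that it lies between $\alpha$ and $N$. I would also check the comparison $2k+c+4\alpha\le k+3c+5\alpha$ in the first regime, which is the analogue of the Remark following Claim~\ref{clm2}. Finally, I would confirm that at degenerate values such as $\alpha=0$, $c=0$, or $k=2c+\alpha$ the coinciding points cause no problem, following the convention of the earlier Remark that a repeated same-color assignment is harmless.
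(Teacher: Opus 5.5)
Your proposal is correct and follows the paper's proof essentially verbatim: the same two lower-bound colorings on half-open intervals, and the same forcing chains from the discrete proof with $1$ replaced by $\alpha$ (for example $(2c-k+2\alpha,\,c+2\alpha,\,3c+4\alpha)_k$ and $(k-c+\alpha,\,c+2\alpha,\,2k+3\alpha)_k$). The range checks you single out are the decisive ones, and all of them hold: $2c-k+2\alpha\ge\alpha$ and $2k+c+4\alpha\le k+3c+5\alpha$ when $k\le 2c+\alpha$, and $k-2c\ge\alpha$ when $k>2c+\alpha$.
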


\begin{proof}

 \begin{case}
    When $k\le2c+\alpha$.
\end{case}
\begin{claim}\label{clm4.2}
    $R_2(c,k;\alpha) = k+3c+5\alpha$
\end{claim}
\begin{proof}The lower bound is given by the following coloring.\\
\[
\begin{aligned}
R &= [\alpha,c+2\alpha) \cup [k+2c+4\alpha,k+3c+5\alpha), \\
B &= [c+2\alpha,k+2c+4\alpha).
\end{aligned}
\]
This coloring of the interval $[\alpha,k+3c+5\alpha)$ contains neither a red solution to the $c$-equation nor a blue solution to the $k$-equation. The verification is analogous to that of the corresponding discrete lower-bound construction. Hence, for every $N<k+3c+5\alpha$, its restriction to $[\alpha,N]$ avoids both forbidden monochromatic solutions. Therefore,
\[
R_2(c,k;\alpha)\geq k+3c+5\alpha.
\]
For the upper bound, we again consider coloring the interval $[\alpha, k+3c+5\alpha]$.

\begin{subcase} Assume that $\alpha$ is colored red. \end{subcase}
\begingroup
\begin{align*}
(\alpha,\alpha,c+2\alpha)_c&\Longrightarrow {c+2\alpha} \text{ is blue}.\\
(c+2\alpha,c+2\alpha,k+2c+4\alpha)_k&\Longrightarrow {k+2c+4\alpha} \text{ is red}.\\
(\tfrac{k+c}{2}+2\alpha,\tfrac{k+c}{2}+2\alpha,k+2c+4\alpha)_c&\Longrightarrow {\tfrac{k+c}{2}+2\alpha} \text{ is blue}.\\
\left(\tfrac{k+c}{2}+2\alpha,\tfrac{k+c}{2}+2\alpha,2k+c+4\alpha \right)_k&\Longrightarrow {2k+c+4\alpha} \text{ is red}.\\
(\alpha,k+2c+4\alpha,k+3c+5\alpha)_c&\Longrightarrow {k+3c+5\alpha} \text{ is blue}.\\
(2c+3\alpha,c+2\alpha,k+3c+5\alpha)_k&\Longrightarrow {2c+3\alpha} \text{ is red}.\\
(\alpha,2c+3\alpha,3c+4\alpha)_c&\Longrightarrow {3c+4\alpha} \text{ is blue}.\\
(2c-k+2\alpha,c+2\alpha,3c+4\alpha)_k&\Longrightarrow {2c-k+2\alpha} \text{ is red}.\\
(\alpha,2c-k+2\alpha,3c-k+3\alpha)_c&\Longrightarrow {3c-k+3\alpha} \text{ is blue}.\\
(k+2\alpha,k+2\alpha,2k+c+4\alpha)_c&\Longrightarrow {k+2\alpha} \text{ is blue}.\\
\end{align*}
Thus, \((k+2\alpha,3c-k+3\alpha,k+3c+5\alpha)\) is a blue solution to the
\(k\)-equation, a contradiction.
\endgroup
\begin{subcase}
    $\alpha$ is colored blue.
\end{subcase} 
\begingroup

\begin{align*}
(\alpha,\alpha,k+2\alpha)_k&\Longrightarrow {k+2\alpha} \text{ is red}.\\
(k+2\alpha,k+2\alpha,2k+c+4\alpha)_c&\Longrightarrow {2k+c+4\alpha} \text{ is blue}.\\
\left(\tfrac{k+c}{2}+2\alpha,\tfrac{k+c}{2}+2\alpha,2k+c+4\alpha\right)_k&\Longrightarrow {\tfrac{k+c}{2}+2\alpha} \text{ is red}.\\
\left(\tfrac{k+c}{2}+2\alpha,\tfrac{k+c}{2}+2\alpha,k+2c+4\alpha\right)_c&\Longrightarrow {k+2c+4\alpha} \text{ is blue}.\\
(c+2\alpha,c+2\alpha,k+2c+4\alpha)_k&\Longrightarrow {c+2\alpha} \text{ is red}.\\
(c+2\alpha,c+2\alpha,3c+4\alpha)_c&\Longrightarrow {3c+4\alpha} \text{ is blue}.\\
(\alpha,3c+4\alpha,k+3c+5\alpha)_k&\Longrightarrow {k+3c+5\alpha} \text{ is red}.\\
(k+c+3\alpha,c+2\alpha,k+3c+5\alpha)_c&\Longrightarrow {k+c+3\alpha} \text{ is blue}.\\
\end{align*}
Thus $(\alpha,k+c+3\alpha,2k+c+4\alpha)$ is a blue solution to the $k$-equation, a contradiction.
\endgroup
Thus, we arrive at a contradiction in both the subcases. Therefore, $R_2(c,k;\alpha) \leq k+3c+5\alpha$.
\end{proof}
\begin{case}
    When $k>2c+\alpha$.
\end{case}
\begin{claim}\label{clm3.1}
    $R_2(c,k;\alpha) = 2k+c+4\alpha$
\end{claim}
\begin{proof}The lower bound is given by the following coloring.\\
\[
\begin{aligned}
R &= [k+2\alpha,2k+c+4\alpha), \\
B &= [\alpha,k+2\alpha).
\end{aligned}
\]
The above coloring of the interval $[\alpha,2k+c+4\alpha)$ avoids both the red solution to the $c$-equation and the blue solution to the $k$-equation.
Therefore, $R_2(c,k;\alpha) \ge  2k+c+4\alpha$\\
For the upper bound, again we can consider coloring the interval $[\alpha, 2k+c+4\alpha]$.

\begin{subcase}
     $\alpha$ is colored red.
\end{subcase}
\begingroup
\begin{align*}
(\alpha,\alpha,c+2\alpha)_c&\Longrightarrow {c+2\alpha} \text{ is blue}.\\
(c+2\alpha,c+2\alpha,k+2c+4\alpha)_k&\Longrightarrow {k+2c+4\alpha}\text{ is red}.\\
(\frac{k+c}{2}+2\alpha,\frac{k+c}{2}+2\alpha,k+2c+4\alpha)_c&\Longrightarrow {\frac{k+c}{2}+2\alpha}\text{ is blue}.
\end{align*}
Since \(k>2c+\alpha\), we have \(k-2c>\alpha\). Hence the point
\(k-2c\) lies in the interval \([\alpha,2k+c+4\alpha]\), and it is
valid to consider its color.\\
   Suppose $k-2c$ is red,
\begin{align*}
(\alpha,k-2c,k-c+\alpha)_c
  &\Longrightarrow {k-c+\alpha}\text{ is blue}.\\
  (k-c+\alpha,c+2\alpha,2k+3\alpha)_k
  &\Longrightarrow {2k+3\alpha}\text{ is red}.\\
  (\alpha,2k+3\alpha,2k+c+4\alpha)_c
  &\Longrightarrow {2k+c+4\alpha}\text{ is blue}.
\end{align*}
Thus $(\frac{k+c}{2}+2\alpha,\frac{k+c}{2}+2\alpha,2k+c+4\alpha)_k$ is a blue solution, a contradiction.\\
Suppose $k-2c$ is blue,
\begin{align*}
(c+2\alpha,k-2c,2k-c+2\alpha)_k
  &\Longrightarrow {2k-c+2\alpha}\text{ is red}.\\
  (k-c+\alpha,k-c+\alpha,2k-c+2\alpha)_c
  &\Longrightarrow {k-c+\alpha}\text{ is blue}.\\
  (k-c+\alpha,c+2\alpha,2k+3\alpha)_k
  &\Longrightarrow {2k+3\alpha}\text{ is red}.\\
    (\alpha,2k+3\alpha,2k+c+4\alpha)_c
  &\Longrightarrow {2k+c+4\alpha}\text{ is blue}.
\end{align*}
Thus $(\frac{k+c}{2}+2\alpha,\frac{k+c}{2}+2\alpha,2k+c+4\alpha)_k$ is a blue solution, a contradiction.
\endgroup

\begin{subcase}
     $\alpha$ is colored blue.
\end{subcase}
\begingroup
\begin{align*}
(\alpha,\alpha,k+2\alpha)_k&\Longrightarrow k+2\alpha \text{ is red.}\\
(k+2\alpha,k+2\alpha,2k+c+4\alpha)_c&\Longrightarrow 2k+c+4\alpha \text{ is blue.}
\end{align*}
Suppose $k-2c$ is red,
\begin{align*}
(k-2c,k+2\alpha,2k-c+2\alpha)_c&\Longrightarrow 2k-c+2\alpha \text{ is blue.}\\
(\frac{k-c}{2}+\alpha,\frac{k-c}{2}+\alpha,2k-c+2\alpha)_k&\Longrightarrow \frac{k-c}{2}+\alpha \text{ is red.}
\end{align*}
Thus $(\frac{k-c}{2}+\alpha,\frac{k-c}{2}+\alpha,k+2\alpha)_c$\text{ is a red solution, a contradiction.}\\

Suppose $k-2c$ is blue,
\begin{align*}
(\frac{k+c}{2}+2\alpha,\frac{k+c}{2}+2\alpha,2k+c+4\alpha)_k&\Longrightarrow \frac{k+c}{2}+2\alpha \text{ is red.}\\
(k-2c,3c+4\alpha,2k+c+4\alpha)_k&\Longrightarrow 3c+4\alpha \text{ is red.}\\
(c+2\alpha,c+2\alpha,3c+4\alpha)_c&\Longrightarrow c+2\alpha \text{ is blue.}\\
(c+2\alpha,c+2\alpha,k+2c+4\alpha)_k&\Longrightarrow k+2c+4\alpha \text{ is red.}
\end{align*}
Thus $(\frac{k+c}{2}+2\alpha,\frac{k+c}{2}+2\alpha,k+2c+4\alpha)_c$\text{ is a red solution, a contradiction.}
\endgroup
Thus, we arrive at a contradiction in both the subcases. Therefore, $R_2(c,k;\alpha) \leq 2k+c+4\alpha$.
\end{proof}

\begin{rem}
The parity condition appearing in the discrete version has no analog
in the continuous version. Its role in the proof of the discrete version is to
ensure that certain midpoint expressions are integers. Since the
continuous problem allows \(x,y,z\) to range over real intervals, these
midpoints are admissible real numbers, and the continuous formula
depends only on whether \(k\leq 2c+\alpha\) or \(k>2c+\alpha\).
\end{rem}
\begin{rem}
For the shifted Schur pair \(x+y+c=z\) and \(x+y+k=z\), with nonnegative real numbers \( c\le k\) and \(\alpha\ge0\), the continuous two-color off-diagonal
Rado number is always \textit{finite}. Unlike the discrete
case, where opposite parity between $c$ and $k$ can lead to an infinite number by admitting a valid two-coloring of the solution set with no
monochromatic solution.
\end{rem}
\end{proof}
\section{Conclusion}
The theory of Rado numbers involves numerous open problems, whose complexity increases significantly with the number of colors. As a result, much of the literature has focused on the two-color case. In this setting, several variants—including disjunctive~\cite{johnson2005disjunctive} and off-diagonal~\cite{ jingmie2024, MyersRobertson2007,robertson_schaal2001} and continuous~\cite{brady2005rado,vestal2025off,vestal2025linear} Rado numbers—have been studied for various classes of linear equations and systems. Nevertheless, the existing work on off-diagonal Rado numbers is largely limited to homogeneous equations, with the nonhomogeneous case remaining comparatively underexplored.

In this work, we determined the $2$-color off-diagonal Rado numbers $R_2(c,k)$ for $x+y+c=z$ and  $x+y+k=z$ for both discrete and continuous versions. There are several directions open for future investigation, including extending the 
present framework to broader families of equations encompassing both homogeneous 
and nonhomogeneous cases, as well as generalizing the known results to colorings 
with more than two colors.
\bibliographystyle{plain}
\bibliography{references}

@article{Schur,
  author  = {Schur, Issai},
  title   = {{\"U}ber die Kongruenz $x^m+y^m\equiv z^m\pmod p$},
  journal = {Jahresbericht der Deutschen Mathematiker-Vereinigung},
  volume  = {25},
  pages   = {114--117},
  year    = {1916}
}

@article{BurrLoo1975,
  author  = {Burr, Stefan A. and Loo, S. J.},
  title   = {On the {R}ado Numbers of Some Linear Equations},
  journal = {Preprint},
  year    = {1975}
}

@article{robertson_schaal2001,
  author  = {Robertson, Aaron and Schaal, Daniel},
  title   = {Off-Diagonal Generalized {S}chur Numbers},
  journal = {Advances in Applied Mathematics},
  volume  = {26},
  number  = {3},
  pages   = {252--257},
  year    = {2001},
  doi     = {10.1006/aama.2000.0718},
}

@article{johnson2005disjunctive,
  author  = {Johnson, Brenda and Schaal, Daniel},
  title   = {Disjunctive {R}ado Numbers},
  journal = {Journal of Combinatorial Theory, Series A},
  volume  = {112},
  number  = {2},
  pages   = {263--276},
  year    = {2005},
  doi     = {10.1016/j.jcta.2005.02.007},
}

@article{MyersRobertson2007,
  author  = {Myers, Kellen and Robertson, Aaron},
  title   = {Two-Color Off-Diagonal {R}ado-Type Numbers},
  journal = {The Electronic Journal of Combinatorics},
  volume  = {14},
  number  = {1},
  pages   = {R53},
  year    = {2007},
  doi     = {10.37236/971},
}

@inproceedings{beutelspacher2006generalized,
  author    = {Beutelspacher, Albrecht and Brestovansky, Walter},
  title     = {Generalized {S}chur Numbers},
  booktitle = {Combinatorial Theory},
  editor    = {Jungnickel, Dieter and Vedder, Klaus},
  series    = {Lecture Notes in Mathematics},
  volume    = {969},
  pages     = {30--38},
  publisher = {Springer},
  address   = {Berlin, Heidelberg},
  year      = {1982},
  doi       = {10.1007/BFb0062985},
}

@article{rado1933studien,
  author  = {Rado, Richard},
  title   = {Studien zur Kombinatorik},
  journal = {Mathematische Zeitschrift},
  volume  = {36},
  pages   = {424--480},
  year    = {1933},
}

@article{sabo2007disjunctive,
  author  = {Sabo, Dusty and Schaal, Daniel and Tokaz, Jacent},
  title   = {Disjunctive {R}ado Numbers for $x_1+x_2+c=x_3$},
  journal = {Integers},
  volume  = {7},
  number  = {1},
  pages   = {A29},
  year    = {2007},
}

@article{vestal2025off,
  author  = {Vestal, Don and Sax, Jonathan},
  title   = {Off-Diagonal Continuous {R}ado Numbers $x_1+x_2+\dots+x_k=x_0$},
  journal = {arXiv preprint arXiv:2511.20528},
  year    = {2025},
  eprint  = {2511.20528},
  archiveprefix = {arXiv},
}

@article{vestal2025linear,
  author  = {Vestal, Donald L., Jr. and Glackin, Anthony},
  title   = {A Linear System Involving the Equation $x_1+x_2+c=x_0$},
  journal = {Congressus Numerantium},
  volume  = {236},
  year    = {2025},
}

@article{yaoxia,
  author  = {Yao, Olivia X. M. and Xia, Ernest X. W.},
  title   = {Two Formulas of 2-Color Off-Diagonal {R}ado Numbers},
  journal = {Graphs and Combinatorics},
  volume  = {31},
  number  = {1},
  pages   = {299--307},
  year    = {2015},
  doi     = {10.1007/s00373-013-1378-9},
}

@article{adak2026off,
  author  = {Adak, Rajat and Bakshi, Yash and Chandran, L. Sunil and Nanoti, Saraswati Girish},
  title   = {Off-Diagonal {R}ado Number for $x+y+c=z$ and $x+qy=z$},
  journal = {arXiv preprint arXiv:2602.23954},
  year    = {2026},
  eprint  = {2602.23954},
  archiveprefix = {arXiv},
}

@article{jingmie2024,
  author  = {Jing, Jin and Mei, Y. M.},
  title   = {On Some Exact Formulas for 2-Color Off-Diagonal {R}ado Numbers},
  journal = {Journal of Combinatorial Mathematics and Combinatorial Computing},
  volume  = {119},
  pages   = {75--83},
  year    = {2024},
  doi     = {10.61091/jcmcc119-08},
}

@article{brady2005rado,
  author  = {Brady, Caitlin S. and Haas, Ruth},
  title   = {{R}ado Numbers for the Real Line},
  journal = {Congressus Numerantium},
  volume  = {177},
  pages   = {109--114},
  year    = {2005}
}
\end{document}